\DeclareRobustCommand{\SkipTocEntry}[5]{}}{%
\DeclareRobustCommand{\SkipTocEntry}[4]{}}
\theoremstyle{plain}
\newtheorem{Th}{Theorem}
\newtheorem{Prop}[Th]{Proposition}
\newtheorem{Lem}[Th]{Lemma}
\theoremstyle{definition}
\newtheorem{Def}[Th]{Definition}
\newtheorem{Rem}[Th]{Remark}
\DeclareMathOperator{\conv}{conv}
\newcommand{\cal}[1]{\mathcal{#1}}
\newcommand{\R}{\mathbb R}
\newcommand{\cP}{\mathcal P}
\newcommand{\cX}{\cal X}
\newcommand{\cQ}{\cal Q}
\DeclareMathOperator{\V}{V}
\DeclareMathOperator{\Vol}{Vol}
\DeclareMathOperator{\Gr}{Gr}
\DeclareMathOperator{\Abs}{Abs}
\newcommand{\re}[1]{(\ref{e:#1})}
\newcommand{\rf}[1]{Figure~\ref{F:#1}}
\title[Bezout-type inequality for mixed volumes of three-dimensional zonoids]{An algebraic-combinatorial proof of a Bezout-type inequality for mixed volumes of three-dimensional zonoids}
\author{Gennadiy~Averkov}
\address{Fakult\"at 1, BTU Cottbus-Senftenberg, Platz der Deutschen Einheit 1, 03046 Cottbus, Germany}
\email{averkov@b-tu.de}
\author{Ivan~Soprunov}
\address{Department of Mathematics and Statistics, Cleveland State University,  2121 Euclid Ave, Cleveland, Ohio, 44115 USA}
\email{i.soprunov@csuohio.edu}
\begin{document}
\selectlanguage{english}

\date{}

\keywords{Geometric inequalities, mixed volume, zonotopes}
\subjclass[2020]{Primary 52A39; Secondary 52A40,14M15}

\maketitle

\begin{abstract} 
	We present a new algebraic-combinatorial approach to proving a Bezout-type inequality for zonoids in dimension three, which has recently been established by Fradelizi, Madiman, Meyer, and Zvavitch. Our approach hints at connections between inequalities for mixed volumes of zonoids and real algebra and matroid theory.  
\end{abstract}

\section{Introduction}

The Brunn-Minkowski theory emerges from the fundamental result of Minkowski, asserting that the $d$-dimensional volume 
of the Minkowski-linear combination $\lambda_1A_1+\dots+\lambda_kA_k$ of convex bodies $A_1,\ldots,A_k$ in $\R^d$ with
non-negative scalar multiples $\lambda_1,\ldots,\lambda_k \in \R_{\ge 0}$  is a homogeneous polynomial of degree $d$
in $\lambda_1,\dots,\lambda_k$:
\[
	f_{A_1,\ldots,A_k} (\lambda_1,\ldots,\lambda_k)=\Vol_d(\lambda_1A_1+\dots+\lambda_kA_k).
\] 
The polynomial $f_{A_1,\ldots,A_k}$ is called the {\it volume polynomial} of the system of bodies $A_1,\ldots,A_k$.  
The  coefficients of the volume polynomial are related to the so-called mixed volume functional. It is a symmetric real-valued functional $\V(K_1,\ldots,K_d)$  of $d$ convex bodies $K_1,\ldots,K_d$, which is uniquely defined by the property that it is Minkowski-linear in each of the $d$ bodies and coincides with the volume functional when the bodies are the same, see \cite[Section 5.1]{Schneider2014}.
Expressing $\Vol_d(\lambda_1A_1+\dots+\lambda_kA_k)$ as the mixed volume and using the multilinearity, one obtains
\[
	f_{A_1,\ldots,A_k} (\lambda_1,\ldots,\lambda_k) = \sum_{1\leq i_1,\ldots,i_d  \leq k} \lambda_{i_1} \cdots \lambda_{i_k} \V(A_{i_1},\ldots,A_{i_d}).
\]
This shows that the coefficients of the volume polynomial $f_{A_1,\ldots,A_k}$ are determined by the set of all mixed volumes 
$\V(A_{i_1},\ldots,A_{i_d})$ that can be produced by the $k$ bodies $A_1,\ldots,A_k$. For example, for three bodies $A_1,A_2,A_3$, one has 
\[
	f_{A_1,A_2,A_3}(\lambda_1,\lambda_2,\lambda_3) =  \sum_{i=1}^3 \V(A_i,A_i,A_i)\lambda_i^3+
	3\!\!\sum_{\substack{1\leq i,j\leq 3\\ i \ne j}}\!\!\V(A_i,A_i,A_j)\lambda_i^2\lambda_j+6 \V(A_1,A_2,A_3) \lambda_1 \lambda_2 \lambda_3.
\] 
Thus, three bodies in dimension three produce $10$ mixed volumes: $3$ volumes $\V(A_i,A_i,A_i)$,
$6$ mixed volumes of the form $\V(A_i,A_i,A_j)$ with $i \ne j$, and one mixed volume $\V(A_1,A_2,A_3)$. A large portion of the Brunn-Minkowski theory is concerned with finding relations between mixed volumes of a given system of convex bodies in terms of inequalities. The most famous such inequality is the Aleksandrov-Fenchel inequality  \cite[Section 7.3]{Schneider2014}
$$\V(A_1,A_1,A_3,\ldots,A_d) \V(A_2,A_2,A_3,\ldots,A_d) \le \V(A_1,A_2,A_3,\ldots,A_d)^2.$$ 
Mixed volumes and inequalities relating them are extremely useful in a wide range of applications, including stochastic geometry, statistics, algebra, algebraic geometry, and combinatorics. See, for example, \cite{Bernstein75, BH20, ZonoidAlgebra, Stan}, as well as \cite[Chapter 4]{BuZa}. One of the most challenging goals of the Brunn-Minkowski theory is to give an exhaustive description of relations between mixed volumes for a given number of bodies $k$ in a given dimension $d$. Equivalently, this can be phrased as the problem of characterizing all volume polynomials $f_{A_1,\ldots,A_k}$ of degree $d$ in $k$ variables. 
This problem goes back to the works of Heine \cite{Heine38} who solved the case of $k=2,3$ in dimension $d=2$
and Shephard \cite{Shephard60} who extended the case of $k=2$ to arbitrary dimension $d$.
All other cases of the Heine-Shephard problem remain open with the smallest unsolved cases being
$(k,d)=(2,4)$ and $(k,d)=(3,3)$. In \cite{Gur09} Gurvits conjectured that the space of all volume polynomials for $(k,d)=(3,3)$
equals the space of homogeneous strongly log-concave polynomials or, equivalently, Lorentzian polynomials \cite{BH20}.
This conjecture was disproved by Brändén and Huh \cite{BH20, Huh23}. See also \cite{menges2023} for most recent results on volume polynomials versus Lorentzian polynomials. In \cite{AS22} the authors gave a partial solution to the Heine-Shephard problem in the case of $(k,d)=(2,4)$, where they obtained a complete inequality description of the
space of square-free parts of volume polynomials.

Analogs of the Heine-Shephard problem can also be studied for more specific classes of convex bodies, which play a special role in convexity and  applications. 
There is an interesting class of centrally-symmetric convex bodies called zonoids.
Zonoids are Hausdorff limits of zonotopes, while zonotopes are defined as Minkowski sums of finitely many segments. 
Zonoids and their mixed volumes appear in a wide range of fields, including algebraic geometry, combinatorics, and statistics, see \cite{FZ23} and references therein.
Hence, within the general study of mixed volume, it is natural to narrow the scope to study relations between mixed volumes of zonoids. More new inequalities for mixed volumes of zonoids in dimension 2 and 3 were obtained in 
\cite{ADRS24} using polyhedral and computer assisted methods.

The aim of this note is to give a structured algebraic-combinatorial proof of a new inequality for mixed volumes of zonoids, which was recently obtained by Fradelizi, Madiman, Meyer, and Zvavitch, see Theorem 5.1 and equation (37) in 
 \cite{FZ23}. 
\begin{Th}\label{main:thm} 
	The mixed volumes of zonoids $A,B,C \subset \R^3$ satisfy 
	\begin{equation} \label{the:ineq} 
		\V(A,A,A) \V(A,B,C) \le \frac{3}{2} \V(A,A,B) \V(A,A,C). 
	\end{equation} 
\end{Th}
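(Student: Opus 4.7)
The plan is to reduce~\eqref{the:ineq} to a combinatorial inequality on absolute values of $2\times 2$ and $3\times 3$ determinants, and then to resolve it using Grassmann--Plücker syzygies combined with a matroid-theoretic balancing argument. By Hausdorff continuity of the mixed volume and the density of zonotopes inside the zonoid cone, it suffices to prove~\eqref{the:ineq} for zonotopes $A,B,C$. Since each side is Minkowski-linear in $B$ and in $C$ separately (with $A$ fixed), writing $B=\sum_j[0,b_j]$ and $C=\sum_k[0,c_k]$ and summing the bilinear inequality reduces matters to the case $B=[0,b]$, $C=[0,c]$. Writing $A=\sum_{i=1}^{n}[0,a_i]$ and using $\V([0,u],[0,v],[0,w])=\tfrac16|\det(u,v,w)|$, multilinearity of the mixed volume turns~\eqref{the:ineq}, after clearing denominators, into the combinatorial assertion
\[
\Bigl(\sum_{i<j<k}|[a_ia_ja_k]|\Bigr)\Bigl(\sum_{i}|[a_ibc]|\Bigr)\;\le\;\Bigl(\sum_{i<j}|[a_ia_jb]|\Bigr)\Bigl(\sum_{i<j}|[a_ia_jc]|\Bigr),
\]
where $[uvw]:=\det(u,v,w)$.

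Expanding both sides into sums over ordered 4-tuples $(i_1,i_2,i_3,i_4)\in[n]^4$ and stratifying by the support $S=\{i_1,\dots,i_4\}$, the contributions with $|S|\le 2$ live only on the right-hand side. For $|S|=3$, choosing the three $a_i$'s as a basis of $\R^3$ and writing $b=\sum\beta_la_l$, $c=\sum\gamma_la_l$ in that basis reduces the matching contribution of each 3-set to the elementary estimate $|\beta_i\gamma_j-\beta_j\gamma_i|\le |\beta_i||\gamma_j|+|\beta_j||\gamma_i|$ summed over unordered pairs, which gives $\mathrm{LHS}_3\le\mathrm{RHS}_3$ term by term.

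The decisive case is $|S|=4$. For the six vectors $(a_i,a_j,a_k,a_l,b,c)\in\R^3$ the Grassmann--Plücker relation in $\mathrm{Gr}(3,6)$ and its index-permutation variants give identities such as
\[
[a_ia_ja_k][a_lbc]-[a_ia_ja_l][a_kbc]+[a_ia_jb][a_ka_lc]-[a_ia_jc][a_ka_lb]=0,
\]
together with the ``pure'' syzygy $\sum_{l\in S}\epsilon_l\,[a_{S\setminus\{l\}}][a_lbc]=0$ for suitable signs $\epsilon_l\in\{\pm 1\}$. Solving this linear system expresses each of the four signed LHS-products $[a_{S\setminus\{l\}}][a_lbc]$ as a $\pm\tfrac12$-combination of the six signed RHS-products $[a_{p_1}b][a_{p_2}c]$ indexed by the partitions of $S$ into two disjoint pairs. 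A direct triangle-inequality bound at this stage only produces the weaker estimate $\sum_l|[a_{S\setminus\{l\}}][a_lbc]|\le 2\sum_{\{p_1,p_2\}}|[a_{p_1}b][a_{p_2}c]|$, and in fact the pointwise four-element inequality \emph{does} fail by exactly that factor (e.g. for $a_1=e_1, a_2=e_2, a_3=e_3, a_4=e_1+e_2+e_3$, $b=e_1+e_3$, $c=e_2+e_3$).

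The main obstacle is therefore a \emph{global} balancing: the local excess at $|S|=4$ must be absorbed by the combination of the full $\mathrm{RHS}_2$ contribution and the slack $\mathrm{RHS}_3-\mathrm{LHS}_3$, which is saturated only when the signs of $\beta_i\gamma_j$ and $\beta_j\gamma_i$ disagree for every pair. The plan is to write the defect $\tfrac{3}{2}\mathrm{RHS}-\mathrm{LHS}$ as an explicit $\mathbb{Z}$-linear combination of signed products of minors of the augmented $3\times(n+2)$ matrix $(a_1\mid\cdots\mid a_n\mid b\mid c)$ and then, via a case analysis on sign patterns---equivalently on the oriented matroid of $\{a_1,\dots,a_n,b,c\}$---to reassemble this combination into a manifestly non-negative sum. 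Executing this algebraic--combinatorial reorganisation, in which Plücker syzygies and the linear matroid of the $a_i$'s interact, is the technical heart of the proof, and the step where the real-algebra/matroid content the authors highlight is actually used.
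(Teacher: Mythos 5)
Your reduction to the determinantal inequality is correct and matches the paper's first step (the paper further normalizes $b=e_1$, $c=e_2$ via \textbf{(P4)}), and the stratification by the support $S$ of the index tuples is a reasonable start: the $|S|\le 3$ estimates you sketch do go through. But there is a genuine gap exactly where you place the ``technical heart'': the $|S|=4$ stratum. You correctly observe that the pointwise four-element inequality fails by a factor of $2$, so the argument must borrow slack globally from the other strata --- and at that point you only state a \emph{plan}: write the defect as a signed combination of products of Pl\"ucker coordinates and reorganize it by a case analysis on the oriented matroid of $\{a_1,\dots,a_n,b,c\}$. None of this is executed. No explicit identity for the defect is produced, no case analysis is carried out, and it is not even argued that the available slack ($\mathrm{RHS}_2$ together with $\mathrm{RHS}_3-\mathrm{LHS}_3$) dominates the accumulated $|S|=4$ excess; the latter is summed over all $4$-subsets while the slack lives on different index patterns, so the claimed absorption is far from automatic. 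As it stands, the decisive step of the proof is missing.

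For comparison, the paper avoids the stratification entirely. After the same reduction (with the $z_i\ne 0$ fixed), it observes that the left-hand side of \eqref{e:matrix} is bi-convex in $(x,y)$ while each factor on the right-hand side is piecewise affine on the cells of a braid-type hyperplane arrangement; Proposition~\ref{P:abstract} then reduces the verification to a generating set of that arrangement, i.e.\ to configurations where the ratios $x_i/z_i$ (and likewise $y_i/z_i$) take at most two distinct values. On such configurations the inequality collapses, after introducing the four sums $s_1,\dots,s_4$, to the identity
\begin{equation*}
(s_1+s_2)(s_3+s_4)(s_1+s_3)(s_2+s_4)-\left(s_2s_3s_4+s_1s_3s_4+s_1s_2s_4+s_1s_2s_3\right)(s_1+s_2+s_3+s_4)=(s_1s_4-s_2s_3)^2\ge 0.
\end{equation*}
In effect, the convexity reduction is precisely what makes the global bookkeeping you postpone unnecessary; to complete your route you would have to actually produce the manifestly non-negative reorganization you describe, which is the whole difficulty.
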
 
There is a general inequality which follows from the Aleksandrov-Fenchel inequality
\begin{equation} \label{e:square} 
\V(A,A,A_3,\ldots,A_d) \V(B,C,A_3,\ldots,A_d) \le 2\V(A,B,A_3,\ldots,A_d)\V(A,C,A_3,\ldots,A_d),
\end{equation}
valid for arbitrary convex bodies $A,B,C,A_3,\dots, A_d$ in $\R^d$, see
\cite[Lemma 5.1]{BGL}, where the constant 2 is best possible. For example, in dimension three \re{square} becomes equality if one takes
$A$ to be the square pyramid $A = \conv( 0, e_1, e_2, e_1+e_2, e_3)$ and $B = [0,e_1]$ and $C= [0,e_2]$ to be unit segments, where $e_1,e_2,e_3$ are the standard basis vectors. 
Thus, the above theorem shows that, in the class of zonoids, inequality \re{square} is no longer sharp. 
As a consequence, this shows that the space of volume polynomials of 3-dimensional zonoids is properly contained in the space of all volume polynomials. 

As mentioned above, the Heine-Shephard problem is open for three bodies in $\R^3$, but may be more
manageable in the case of zonoids.
It would be interesting to know whether (\ref{the:ineq}) would occur in a complete inequality description 
of the space of volume polynomials of three zonoids in $\R^3$.

\section{The proof}

Recall that a subset of $\R^d$ is a \emph{convex body} if it is compact, convex, and non-empty. The Hausdorff metric of convex bodies $K$ and $L$ is the least $\rho \ge 0$ such that every point of $K$ is at at distance at most $\rho$ to some point of $L$ and, vice versa, every point of $L$ is at distance at most $\rho$ to some point of $K$. 
The Minkowski sum of $K,L \subseteq \R^d$ is defined by $K+ L = \{ p+ q \colon p \in K, \ q \in L\}$ and the non-negative scaling of $K \subseteq \R^d$ by $\lambda \in \R_+$ is defined by $\lambda K = \{ \lambda p \colon p \in K\}$. 
A \emph{zonotope} is a convex polytope which is the Minkowski sum of finitely many segments, and a \emph{zonoid} is defined as the limit of a sequence of zonotopes in the Hausdorff metric.

The theory of mixed volumes is developed for arbitrary dimension $d$, but here we restrict our attention to dimension $d=3$. 
Let $A,B,C$ be convex bodies in $\R^3$.  We list the defining properties of the mixed volume of convex bodies in dimension three: 
\begin{enumerate}[label= {\bfseries (P\arabic*)} ]
	\item $\V(A,B,C)$ is non-negative and does not depend on the order, in which $A,B,C$ occur in the mixed volume. 
	\item $\V(A,B,C)$ is Minkowski-linear in each of the three bodies $A,B,C$, which means that $\V( \sum_{i=1}^m \alpha_i A_i, B, C) = \sum_{i=1}^m \V(A_i,B,C)$ holds when $\alpha_i\ge 0 $ and $A_i,B,C$ are convex bodies in $\R^3$.  (Note that in view of (P1), the linearity also holds with respect to $B$ and $C$.)
	\item $\V(A,A,A)$ is the volume $\Vol_3(A)$ of $A$. 
\end{enumerate} 
We also rely on the following basic and well-known properties of mixed volumes (again, restricting the attention to the three-dimensional case): 
\begin{enumerate}[start = 4, label = {\bfseries  (P\arabic*)}]
	\item \label{lin:trans} $\V(\phi(A), \phi(B), \phi(C)) = | \det(\phi) | \V(A,B,C)$ for every linear transformation $\phi : \R^3 \to \R^3$.
	\item $\V(A,B,C)=0$ if two of the three bodies $A,B,C$ are parallel segments. 
	\item $\V(A',B', C') = \V(A,B,C)$ when $A', B', C'$ are translates of $A, B$ and $C$, respectively. 
	\item $\V(A,B,C)$ is continuous in the Hausdorff metric in each of the three bodies $A, B$ and $C$. 
	\item \label{mv:det} If $A, B,C$ are segments $A = [0,a], B = [0,b], C = [0,c]$ joining the origin with $a \in \R^3$, $b \in \R^3$ and $c \in \R^3$, respectively, then $\V(A,B,C) = \frac{1}{3!} | \det(a,b,c)|$. 
\end{enumerate} 
 As mentioned in \cite{FZ23}, it is not hard to see that for verifying \eqref{the:ineq} on zonoids, it is enough to verify it on zonotopes, and for the verification of \eqref{the:ineq} on zonotopes, it is enough to deal with the case $A = \sum_{i=1}^m [0,a_i]$, $B = [0,e_1]$ and $C = [0,e_2]$, where $a_i = (x_i,y_i,z_i) \in \R^3$, $e_1= (1,0,0), e_2 = (0,1,0)$.  Such a reduction can be easily justified by the properties of mixed volumes, listed above. As \eqref{the:ineq} is additive in both $B$ and $C$, which are zonotopes, using the additivity of the mixed volume, it is enough to consider the case when both $B$ and $C$ are segments. By the continuity of the mixed volumes, it is enough to handle the generic case of non-parallel segments $B$ and $C$. But then, in view of \ref{lin:trans}, one can fix $B = [0,e_1]$ and $C =[0,e_2]$. 

We may assume that $m \ge 3$, since for $m < 3$,  one has $\Vol_3(A) =0$ and the assertion is trivially satisfied. Having carried out the latter reduction, one then can use \ref{mv:det} to reformulate \eqref{the:ineq} as the inequality for the absolute values of certain minors of the $3 \times m$ matrix $(a_1,\ldots,a_m) = \begin{pmatrix} x_1 & \cdots & x_m \\ y_1 & \cdots & y_m \\ z_1 & \cdots & z_m \end{pmatrix}$. In this way, in \cite[Section 5]{FZ23} it was shown that Theorem~\ref{main:thm} is equivalent to the following lemma. 
\begin{Lem}\label{L:FMMZ}
Let $m\geq 3$. Then for any collection of $m$ vectors
$\{(x_i,y_i,z_i) : 1\leq i\leq m\}$ in $\R^3$ one has
\begin{align}\label{e:matrix}
\sum_{1 \le i < j < k \le m}  \left |\det\!\!\begin{pmatrix}
x_i & x_j & x_k \\
y_i & y_j & y_k \\
z_i & z_j & z_k
\end{pmatrix}\right|  \sum_{i=1}^m |z_i|  \le \!\!\!\sum_{1 \le i < j  \le m} \left |\det\!\!\begin{pmatrix}
y_i & y_j \\
z_i & z_j 
\end{pmatrix}\right|\sum_{1 \le i < j \le m} \left |\det\!\!\begin{pmatrix}
x_i & x_j  \\
z_i & z_j 
\end{pmatrix}\right|.
\end{align}
\end{Lem}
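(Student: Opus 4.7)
First, I would introduce the shorthands $M^{xz}_{ij} := x_i z_j - x_j z_i$, $M^{yz}_{ij} := y_i z_j - y_j z_i$ for the two-by-two minors and $D_{ijk} := \det(a_i,a_j,a_k)$ for the three-by-three minor. Both sides of \eqref{e:matrix} are invariant under $a_i \mapsto -a_i$ (the signed minors flip, but their absolute values and $|z_i|$ do not change), so by continuity I may assume $z_i > 0$ for every $i$. After introducing the affine coordinates $p_i := x_i/z_i$, $q_i := y_i/z_i$ and weights $w_i := z_i > 0$, a direct calculation would recast \eqref{e:matrix} as
\[
\Bigl(\sum_{i<j<k} w_i w_j w_k\,|\Delta_{ijk}|\Bigr)\Bigl(\sum_\ell w_\ell\Bigr)\leq\Bigl(\sum_{i<j} w_i w_j\,|q_i-q_j|\Bigr)\Bigl(\sum_{i<j} w_i w_j\,|p_i-p_j|\Bigr),
\]
where $\Delta_{ijk}$ is twice the signed area of the planar triangle on $(p_i,q_i)$, $(p_j,q_j)$, $(p_k,q_k)$. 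Geometrically this is the zonotope statement $\Vol_3(A)\cdot w_{e_3}(A) \leq \operatorname{Area}(\pi_{yz}(A))\cdot \operatorname{Area}(\pi_{xz}(A))$, strictly sharper than the factor-$2$ bound available for arbitrary convex bodies.

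The algebraic engine I would rely on is the Plücker-type identity
\[
D_{ijc}\,z_\ell - D_{ij\ell}\,z_c = M^{xz}_{c\ell}\,M^{yz}_{ij} - M^{yz}_{c\ell}\,M^{xz}_{ij},
\]
which one checks by Laplace-expanding both $D_{ijc}$ and $D_{ij\ell}$ along the third column (the $M^{xy}_{ij}$-contributions cancel). Passing to absolute values through $|X|\leq|Y|+|X-Y|$ then yields the local exchange inequality
\[
|D_{ijc}|\,|z_\ell| \leq |D_{ij\ell}|\,|z_c| + |M^{yz}_{ij}|\,|M^{xz}_{c\ell}| + |M^{xz}_{ij}|\,|M^{yz}_{c\ell}|,
\]
which trades one index $c$ of the triple for a fresh index $\ell$ at the cost of two pair-by-pair terms of precisely the form occurring on the right-hand side of \eqref{e:matrix}.

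To finish, I would aggregate the local exchanges over a carefully chosen family of quadruples $(i,j,c,\ell)$, telescoping so that the left-hand side accumulates $\sum_{i<j<k}|D_{ijk}|\cdot\sum_\ell|z_\ell|$ while the accumulated pair-penalties assemble into a majorant of $\bigl(\sum_{i<j}|M^{yz}_{ij}|\bigr)\bigl(\sum_{i<j}|M^{xz}_{ij}|\bigr)$. A natural first attempt would be to fix a reference index $r\in\{1,\dots,m\}$ (say one with maximal $|z_r|$) and reduce every triple $T\not\ni r$ to an anchored triple containing $r$ via a single exchange $c\leftrightarrow r$. The main obstacle will be the combinatorial bookkeeping of the penalties: naive symmetric averaging over permutations of the four indices vanishes, because the signed identity is antisymmetric in $c$ and $\ell$, so a successful aggregation must order the exchanges cleverly --- plausibly through the oriented-matroid structure of $\{a_1,\dots,a_m\}$, or via an LP/transportation duality --- to make the multiplicities collapse to the constant $1$ on the right-hand side. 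This is where I expect the ``algebraic-combinatorial'' and matroid-theoretic flavour advertised in the abstract to enter decisively.
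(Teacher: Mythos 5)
Your setup (sign normalization to $z_i>0$, passage to slopes $p_i=x_i/z_i$, $q_i=y_i/z_i$ with weights $w_i=|z_i|$) is fine, and your exchange identity $D_{ijc}\,z_\ell - D_{ij\ell}\,z_c = M^{xz}_{c\ell}M^{yz}_{ij} - M^{yz}_{c\ell}M^{xz}_{ij}$ is a correct Grassmann--Pl\"ucker relation (it checks out by expanding along the third column, as you say, and it resonates with the Grassmannian reformulation in the paper's closing remark). But the proof is not complete: the entire difficulty of the lemma sits in the aggregation step, which you explicitly leave open (``the main obstacle will be the combinatorial bookkeeping,'' ``plausibly through the oriented-matroid structure \ldots or via an LP/transportation duality''). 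The obstruction is concrete. Writing the left-hand side as $\sum_{T,\ell}|D_T|\,|z_\ell|$, the terms with $\ell\in T$ can be absorbed by the degenerate exchange (take $\ell\in\{i,j\}$, so $D_{ij\ell}=0$), and this already settles $m=3$; but for $m\ge 4$ each ``external'' term $|D_T|\,|z_\ell|$ with $\ell\notin T$ is traded by your inequality for another external term $|D_{T'}|\,|z_c|$ with $c\notin T'$ of exactly the same type. Any bijective pairing of these exchanges cancels all determinant terms on both sides and leaves the vacuous statement that the penalties are non-negative, while a non-bijective scheme must control multiplicities so that each product $|M^{yz}_{ij}|\,|M^{xz}_{st}|$ is used at most once --- and your penalties only ever produce such products with $\{i,j\}\cap\{s,t\}=\emptyset$, so you cannot even call on the full right-hand side. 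No choice of anchoring or ordering is exhibited that makes this work, so there is a genuine gap.

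For comparison, the paper takes a different and self-contained route: with $z$ fixed, the left-hand side is bi-convex in $(x,y)$ and each factor on the right is piecewise affine on the cells of a braid arrangement in the variables $x_i/z_i$ (respectively $y_i/z_i$). A general principle (Proposition~\ref{P:abstract}) then reduces the inequality to points of a generating set of that arrangement, i.e.\ to configurations where $x_i/z_i$ and $y_i/z_i$ each take at most two values. There the vectors $a_i/z_i$ occupy at most four positions, and with $s_1,\dots,s_4$ the total weights of the four groups the inequality collapses to $(s_1+s_2)(s_3+s_4)(s_1+s_3)(s_2+s_4) - \bigl(\sum_{j}\prod_{i\neq j}s_i\bigr)\bigl(\sum_i s_i\bigr) = (s_1s_4-s_2s_3)^2 \ge 0$. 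If you want to salvage your approach, note that equality in the lemma occurs at nondegenerate configurations with $s_1s_4=s_2s_3$ (e.g.\ $s_1=\dots=s_4=1$), so any aggregation of triangle inequalities must be simultaneously tight there; that is a useful sanity check on candidate bookkeeping schemes.
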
 

Our contribution is a systematic proof of this lemma, which adheres to combinatorial and algebraic principles. The right-hand side of  \eqref{e:matrix} are functions of the three groups of variables:  $x = (x_1,\ldots,x_m)$, $y = (y_1,\ldots,y_m)$ and $ z = (z_1,\ldots,z_m)$. If $y$ and $z$ are fixed and $x$ is varied, then these functions are continuous piecewise linear in $x$. If $x$ and $z$ are fixed, they are continuous piecewise linear in $y$. Following the original proof, we heavily rely on this qualitative observation, but we use a different proof strategy. 
The original proof uses Bonnesen's inequality in one of the proof steps, whereas our proof is self-contained.  The process of deriving estimates in the original proof is rather intricate, while in our proof, after we exploit certain qualitative properties of the functions in \eqref{e:matrix}, we derive \eqref{e:matrix} in one shot. Also, in the original proof,
 one observes the behavior of  piecewise affine functions on $\R$ at $+\infty$ and $-\infty$, which is a technicality we can avoid in our proof. 
 
 There is yet another qualitative aspect of \eqref{e:matrix}, which is important. When $z$ is fixed, the left-hand side of \eqref{e:matrix} is a conic combination of the absolute values of determinants. Determinants are multi-linear in each of their rows so that the absolute values of the determinants are multi-convex in each of their rows. It follows that, when $y, z$ are fixed, the left-hand side of \eqref{e:matrix} is convex in $x$, and if $x, z$ are fixed, then it is convex in $y$. We introduce a general formalism which captures the crucial qualitative properties of \eqref{e:matrix}.

\begin{Def} Let $\cP=\{P_1, \ldots, P_M\}$ be a family of finitely many $m$-dimensional polyhedra with $P_1 \cup \cdots \cup P_M = \R^m$. We call such $\cP$ a \emph{polyhedral subdivision} of $\R^m$ if for any two distinct polyhedra $P_i$ and $P_j$ in this family the intersection $P_i \cap P_j$ is either empty or a proper face of both $P_i$ and $P_j$. 
We call $G \subseteq \R^m$ a \emph{generating set} of $\cP$ if each $P_i$ is the convex hull of some subset $G_i$ of $G$. 
\end{Def} 

For example, when $\cP$ is a family of pointed polyhedra, then the union of all vertices and unbounded one-dimensional faces of polyhedra in $\cP$ a generating set of $\cP$. See also an example in Fig.~\ref{ex:subdivis:gen:set}.

\begin{figure} 
\begin{tikzpicture}[scale = 0.8]
\draw[color=orange,line width = 2pt] (0,2) -- (3,2) -- (3,0);
\draw[color=orange,line width = 2pt] (0,3) -- (3,3); 
\draw[color=orange,line width = 2pt] (4,0) -- (4,2);
\draw[color=orange,line width = 2pt] (5,4) -- (5,6);
\draw[color=orange,line width = 2pt] (6,0) -- (6,3) -- (8,3);
\draw[line width = 1pt] (3,2) -- (4,2) -- (5,3) -- (6,3) -- (5,4) -- (5,3) ;
\draw[line width = 1pt] (3,2) -- (3,3) -- (4,3) -- (4,2);
\draw[line width = 1pt] (4,3) -- (5,4);
\foreach \x/\y in {3/2,4/2,3/3,4/3,5/3,6/3,5/4}
{
	\fill[orange] (\x,\y) circle (0.12);
}
\end{tikzpicture} 
\caption{A polyhedral subdivision of $\R^2$ and its generating set, being a union of seven rays and two points.}\label{ex:subdivis:gen:set}
\end{figure}
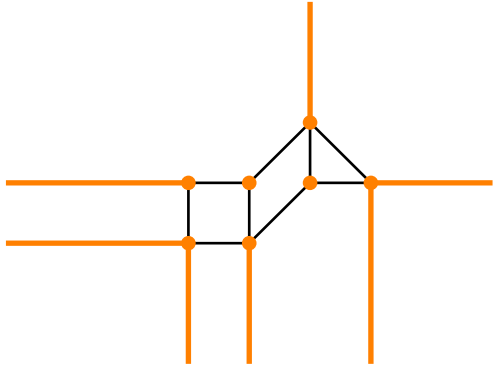 

\begin{Def} We say that $g: \R^m \to \R$ is a \emph{piecewise affine function} if there is a polyhedral subdivision $\cP = \{P_1,\ldots,P_M\}$ of $\R^m$ such that $g$ is affine on each $P_i$, for $1 \le i \le M$. In this case, we say that $\cP$ is consistent with $g$. 
\end{Def} 

\begin{Def} 
We say a function $f : \R^m \times \R^n \to \R$ is {\it bi-convex} if $f(x,y)$ is convex with respect to $x$ for each fixed $y \in \R^n$ and with respect to $y$ for each fixed $x \in \R^m$. 
\end{Def}


\begin{Prop}\label{P:abstract}
	Let  $\cP$ and  $\cQ$ be polyhedral subdivisions of $\R^m$ and $\R^n$ with generating sets $G$ and $H$, respectively.
	 Let $f : \R^m \times \R^n \to \R$ be a bi-convex function and $g : \R^m \to \R$ and $h : \R^n \to \R$ 
	piecewise affine functions, such that $\cP$ and  $\cQ$ are consistent with $g$ and $h$, respectively. Then the inequality 
	\(f(x,y) \le g(x) h(y)
	\) is valid for all $(x,y) \in \R^m\times\R^n$ if and only if it is valid for all $(x,y) \in G\times H$. 
 \end{Prop}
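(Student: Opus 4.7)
The ``only if'' direction is immediate because $G \times H \subseteq \R^m \times \R^n$. For the ``if'' direction, the plan is to extend the inequality from $G \times H$ to $\R^m \times \R^n$ in two one-variable steps, each driven by the same principle: a function that is convex on a polyhedron $P = \conv(G')$ attains its supremum on $G'$, because any point of $P$ is a finite convex combination of elements of $G'$ and Jensen's inequality then bounds the value by a maximum over $G'$.

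In the first step I would fix an arbitrary $y_0 \in H$ and consider
\[
\phi_{y_0}(x) := f(x,y_0) - g(x)\,h(y_0).
\]
Bi-convexity of $f$ makes $f(\cdot,y_0)$ convex on $\R^m$, and since $g$ is affine on each $P_i \in \cP$ while $h(y_0)$ is a scalar, the map $x \mapsto g(x) h(y_0)$ is affine on each $P_i$ (independent of the sign of $h(y_0)$). Hence $\phi_{y_0}$ is convex on each $P_i$. For arbitrary $x \in \R^m$ I would choose any $P_i \ni x$, write $x$ as a finite convex combination of elements of $G_i \subseteq G$, and invoke Jensen's inequality on $P_i$ together with the hypothesis $\phi_{y_0}(g)\le 0$ for $g \in G$ to deduce $\phi_{y_0}(x) \le 0$. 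This establishes $f(x,y_0) \le g(x)h(y_0)$ on all of $\R^m \times H$.

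The second step is the mirror image: fix any $x_0 \in \R^m$ (now possibly outside $G$) and set $\psi_{x_0}(y) := f(x_0,y) - g(x_0)\,h(y)$. This function is convex on each $Q_j \in \cQ$ for the same reason as before, and by the first step it is non-positive on $H$. Repeating the Jensen argument, now with $Q_j = \conv(H_j)$ for $H_j \subseteq H$, extends the inequality to $\R^n$ and completes the proof.

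I do not foresee any serious obstacle; the argument is mostly bookkeeping. The one conceptual point that must be kept straight is that the product $g(x)h(y)$ is, in general, \emph{not} convex in either variable globally — only piecewise affine — so it is essential to work inside one cell of $\cP$ or $\cQ$ at a time, where ``convex minus affine is convex'' rescues the argument. Unbounded cells are handled uniformly because the definition of generating set guarantees that every point of an unbounded $P_i$ is nonetheless a finite convex combination of elements of $G_i$; this is exactly what allows one to avoid the behaviour-at-infinity technicalities alluded to in the discussion preceding the proposition.
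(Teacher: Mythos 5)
Your proof is correct and follows essentially the same route as the paper's: both rest on writing a point of a cell as a convex combination of generating-set elements, applying bi-convexity one variable at a time via Jensen, and exploiting the affineness of $g$ and $h$ on the cells to reassemble the product $g(x)h(y)$. The only difference is organizational --- you split the argument into two one-variable passes (subtracting the affine term $g(\cdot)h(y_0)$ to get a cellwise convex function), whereas the paper chains both estimates into a single displayed computation; the underlying inequalities are identical.
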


\begin{proof} The ``only if'' implication is trivial, so assume that the inequality $f(x,y) \le g(x) h(y)$ is valid for all $(x,y) \in G\times H$. Considering arbitrary $(x,y)\in \R^m\times\R^n$, we want to verify $f(x,y) \le g(x) h(y)$. Let $\cP = \{P_1,\ldots, P_M\}$ and $\cQ = \{Q_1,\ldots,Q_N\}$. Furthermore, let $P_i = \conv( G_i)$ and $Q_j = \conv(H_j)$, where $G_i \subseteq G$ and $H_j \subseteq H$. 
	
	Pick $i$ and $j$ such that $x \in P_i$ and $y \in G_j$ and write $x$ and $y$ as convex combinations $x = \sum_{s \in S} \lambda_s x_s$ and $y = \sum_{t \in T} \mu_t y_t$, where $S$ and $T$ are finite index sets, $x_s \in G_i$ and $y_t \in H_j$. We have 
	\begin{align*} 
		f(x,y) &  = f\bigl( \sum_{s \in S} \lambda_s x_s, \sum_{t \in T} \mu_t y_t\bigr) 
			\\ & \le \sum_{s \in S, t \in T} \lambda_s \mu_t f(x_s,y_t) & & \text{(by bi-convexity of $f$)}
			\\ & \le \sum_{s \in S, t \in T} \lambda_s \mu_t g(x_s) h(y_t) & & \text{(by assumption, in view of $x_s \in G_i, y_t \in H_j$)}
			\\ & = \bigl( \sum_{s \in S} \lambda_s g(x_s) \bigr) \bigl( \sum_{t \in T} \mu_t h(y_t) \bigr) 
			\\ & = g \bigl( \sum_{s \in S} \lambda_s x_s \bigr) h \bigl( \sum_{t \in T} \mu_t y_t \bigr) & & \text{(by affine linearity of $g$ on $P_i$ and $h$ on $Q_j$)}. 
			\\ & = g(x) h(y). 
	\end{align*}
\end{proof}

\begin{proof}[Proof of Lemma~\ref{L:FMMZ}] 
Since the left- and the right-hand side of \eqref{e:matrix} are continuous in $z$, it suffices to verify the assertion in the generic case $z_i \ne 0$ for each $i=1,\ldots,m$. The degenerate case, with some of the $z_i$'s equal to $0$, follows by taking the limit of the generic case.  We assume that the values $z_i \ne 0$ are fixed and view the left- and the right-hand side of \eqref{e:matrix} as functions of $x$ and $y$. The inequality we want to verify has  the structure $f(x,y) \le g(x) h(y)$ as described in Proposition~\ref{P:abstract}. In our case, we have $g=h$, so that the inequality that we prove is $f(x,y) \le g(x) g(y)$. 

In order to use Proposition~\ref{P:abstract}, we first determine a polyhedral subdivision of $\R^m$ consistent with~$g$. The function $g(x)$ is the sum of functions $g_{ij} (x) := \left| \det \begin{pmatrix} x_i & x_j \\ z_i & z_j \end{pmatrix} \right|$ with $1 \le i < j \le m$.  The function $g_{ij}(x)$ is affine  on both sides of the hyperplane  $X_{ij} := \{ x \in \R^m \colon x_i / z_i = x_j / z_j \}$. The condition $x_i / z_i = x_j / z_j$ means that the vectors $(x_i, z_i)$ and $(x_j,z_j)$ have the same slope. It follows that the function $g(x) = \sum_{1 \le i < j \le m} g_{ij}(x)$ is affine on each region of the hyperplane arrangement $\cX = \{X_{ij} \colon 1 \le i < j \le m\}$. 

We note that $\cX$ is essentially the well-known \emph{braid arrangement}:\footnote{$\cX$ is the braid arrangement when all $z_1 = \cdots = z_m =1$, but the case $z_i \ne 0$ is equivalent to this specific case up to a non-singular linear transformation.} each of the $m!$ regions of $\cX$ corresponds to a way of sorting the values $x_i / z_i$. That is, each region is given by the system of inequalities $x_{\sigma(1)}/z_{\sigma(1)} \le \cdots \le x_{\sigma(m)} / z_{\sigma(m)}$, for some permutation $\sigma\in S_m$. See also Fig.~\ref{fig:braid} for an illustration for $m=3$. 
Let us denote such a region by $R_\sigma$. It is clear that the polyhedral subdivision 
$\cP = \{ R_\sigma : \sigma\in S_m \}$ is consistent with $g$. 
The region $R_\sigma$  is a polyhedral cone whose lineality space is
one-dimensional, given by the equations $x_1/ z_1 = \cdots = x_m / z_m$, and whose two-dimensional faces are given by the conditions $x_{\sigma(1)} / z_{\sigma(1)}  = \cdots = x_{\sigma(\ell)} / z_{\sigma(\ell)} \le x_{\sigma(\ell+1)} / z_{\sigma(\ell+1)}  = \cdots = x_{\sigma(m)} / z_{\sigma(m)}$, for $\ell =1,\ldots,m-1$. 
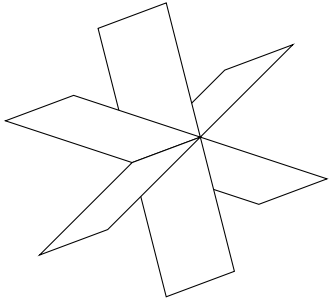
\begin{figure}[h]
\begin{center}
	\begin{tikzpicture}[scale=1.5, tdplot_main_coords]
		\filldraw[fill=white] (0,0,0) -- (1,1,1) -- (0,3/2,3/2) -- (-1,1/2,1/2) -- cycle; 
		\filldraw[fill=white] (0,0,0) -- (1,1,1) -- (1/2,1/2,2) -- (-1/2,-1/2,1) -- cycle; 
		\filldraw[fill=white] (0,0,0) -- (1,1,1) -- (1/2,2, 1/2) -- (-1/2,1,-1/2) -- cycle; 
		\filldraw[fill=white] (0,0,0) -- (1,1,1) -- (3/2,3/2,0) -- (1/2,1/2,-1) -- cycle; 
		\filldraw[fill=white] (0,0,0) -- (1,1,1) -- (3/2,0,3/2) -- (1/2,-1,1/2) -- cycle; 
		\filldraw[fill=white] (0,0,0) -- (1,1,1) -- (2,1/2, 1/2) -- (1,-1/2,-1/2) -- cycle; 		
	\end{tikzpicture}
\end{center}
\caption{The braid arrangement in dimension $3$. The respective three planes, given by the equations $x_1 = x_2$, $x_1 = x_3$ and $x_2 =x_3$, intersect in the line given by the equations $x_1 = x_2 =x_3$. The the planes decompose the space into six cells, with each cell determined by a permutation $\sigma \in  S_3$ and defined by the inequalities $x_{\sigma(1)} \le x_{\sigma(2)} \le x_{\sigma(3)}$.} \label{fig:braid}
\label{F:braid}
\end{figure}
Since the lineality space of $R_\sigma$ is one-dimensional, $R_\sigma$ is the convex hull of its two-dimensional faces, as illustrated in \rf{braid}.

It follows that the set $G$ consisting of all $x \in \R^m$, for which the sequence $x_1/z_1,\ldots,x_m/z_m$ contains at most two distinct values, is a generating set of $\cP$. Therefore, by Proposition~\ref{P:abstract} it is enough to verify $f(x,y) \le g(x) g(y)$ for all $x,y \in G$. 

Consider arbitrary $x, y \in G$. There exist partitions $\{E, E'\}$ and $\{F,F'\}$ of $\{1,\ldots,m\}$ and numbers $\lambda, \lambda, \mu, \mu' \in \R$ such that $x_i / z_i = \lambda$ for all $i \in E$, $x_i/ z_i = \lambda'$ for all $i \in E'$, $y_i / z_i = \mu$ for all $i \in F$ and $y_i / z_i = \mu'$ for all $i \in F'$. Under these conditions, we have 
\begin{align*} 
	g(x) &  =  \sum_{i \in E, i' \in E'} g_{ii'} (x) =  \sum_{i \in E, i' \in E'} |\lambda- \lambda'| |z_i| |z_{i'}|  = |\lambda - \lambda' | \left(\sum_{i \in E} |z_i| \right) \left(\sum_{i' \in E'} |z_{i'}| \right), \\
	g(y) &  =  \sum_{j \in F, j' \in F'} g_{jj'} (y) =  \sum_{j \in F, j' \in F'} |\mu- \mu'| |z_j| |z_{j'}|  = |\mu - \mu' | \left(\sum_{j \in F} |z_i| \right) \left(\sum_{j' \in F'} |z_{j'}| \right).  
\end{align*} 
To calculate the function $f(x,y)$, we analyze the sum $\sum_{1 \le i < j < k \le m} |\det(a_i,a_j,a_k)|$, with $a_i = (x_i,y_i,z_i)$. 
Under our choice of $x$ and $y$, every vector $a_i$ has one of the four forms $z_i ( \lambda, \mu, 1)$, $z_i (\lambda,\mu',1)$, $z_i (\lambda', \mu,1)$, or $z_i (\lambda',\mu',1)$. Hence, $|\det(a_i,a_j,a_k)|$ is $|z_i z_j z_k|$ times the determinant of a $3 \times 3$ matrix whose columns belong to  $\{\lambda,\lambda' \} \times \{\mu,\mu'\} \times \{1\}$. We have the partition
$\{1,\ldots,m\}=(E \cap F)\cup(E \cap F')\cup(E' \cap F)\cup(E'  \cap F')$. If any two of the three indices $\{i, j,k\}$ belong to the same part of the partition then two of the three vectors $a_i,a_j,a_k$ coincide and, hence, $|\det(a_i,a_j,a_k)| =0$. If the indices $i,j,k$ belong to three distinct parts, then $|\det(a_i,a_j,a_k)| = |z_i z_j z_k| \cdot |\lambda - \lambda'| \cdot | \mu - \mu'|$. 
This follows from the observation that 
$\frac{1}{2} | \det(a_i/z_i, a_j/z_j,a_k/z_k)| = \frac{1}{2} |\lambda - \lambda' | \cdot |\mu - \mu'|$ is the area of a triangle 
formed by three vertices of a rectangle with vertex set $\{\lambda,\lambda'\} \times \{\mu,\mu'\}$ and edge lengths $|\lambda- \lambda'|$ and $|\mu - \mu'|$.

Now, denoting 
\begin{align*} 
		s_1  & := \sum_{i \in E \cap F} |z_i|, &  s_2 & := \sum_{i \in E \cap F' } |z_i|, 
		\\ s_3 & := \sum_{i \in E'  \cap F} |z_i|, & s_4 & := \sum_{i \in E'  \cap F' } |z_i|
\end{align*} 
we can express the two sides of the inequality $f(x,y) \le g(x) g(y)$ as follows: 
\begin{align*} 
	f(x,y) & = | \lambda - \lambda'  | \cdot | \mu - \mu' | \cdot \left( s_2 s_3 s_4 + s_1 s_3 s_4 + s_1 s_2 s_4 + s_1 s_2 s_3 \right) (s_1 + s_2 + s_3 + s_4), \\
	g(x) g(y) & = | \lambda - \lambda'  | \cdot | \mu - \mu' | \cdot (s_1 + s_2) (s_3 + s_4) (s_1 + s_3) (s_2 + s_4). 
\end{align*} 
Finally, $f(x,y) \le g(x) g(y)$ follows from the non-negativity of 
\begin{align*} 
	& (s_1 + s_2) (s_3 + s_4) (s_1 + s_3) (s_2 + s_4) - \left( s_2 s_3 s_4 + s_1 s_3 s_4 + s_1 s_2 s_4 + s_1 s_2 s_3 \right) (s_1 + s_2 + s_3 + s_4) 	
\\  = & (s_1 s_4 - s_2 s_3)^2. 
\end{align*} 
 \end{proof}

\begin{Rem} 
	The fact that \eqref{the:ineq} ultimately follows from the non-negativity of the square $(s_1 s_4 - s_2 s_3)^2$ might come as a surprise. In truth, this came as a surprise to the authors themselves. We simply followed the path suggested by Proposition~\ref{P:abstract}, until we eventually saw that \eqref{the:ineq} reduces to  $(s_1 s_4 - s_2 s_3)^2\geq 0$.
\end{Rem} 

\begin{Rem}
	In hindsight, we see that the validity of \eqref{the:ineq} for zonotopes generated by at most $m$ segments follows from the validity of \eqref{the:ineq} for zonotopes generated by at most $4$ segments, because the reduction we carry out in the proof of Lemma~\ref{L:FMMZ} corresponds to checking \eqref{the:ineq} for $A$ being the zonotope  $A = s_1 [0, (\lambda,\mu,1) ] + s_2 [0, (\lambda,\mu',1) ] + s_3 [0, (\lambda',\mu,1) ] + s_4 [0, (\lambda',\mu',1)] $ and $B$ and $C$ being the segments $B = [0,e_1]$ and $C = [0,e_2]$, respectively. Our proof of Lemma~\ref{L:FMMZ} also implies that the constant $3/2$ in \eqref{the:ineq} is optimal. To see this, note that the equality for $A$, $B$, and $C$ as above is attained when $s_1 s_4  =  s_2 s_3$, for example, for $s_1 = \cdots =s_4 = 1$. 
\end{Rem} 

\begin{Rem}
	Lemma~\ref{L:FMMZ} has a  natural algebraic-geometric interpretation in terms of the Grassmannian. The system of $d \times d$ minors of a given $d \times n$ matrix $A = (a_1,\ldots,a_n) \in \R^{n \times d}$ is a point of the so-called $(d,n)$-Grassmannian. More formally, let $d \le n$ and let $\binom{[n]}{d}$ be the family of $d$ element subsets of $[n] := \{1,\ldots,n\}$. If $I = \{i_1,\ldots,i_d\} \in \binom{[n]}{d}$ with $1 \le i_1 < \cdots < i_d \le n$, then we use $A_I$ to denote the submatrix $(a_{i_1},\ldots, a_{i_d}) \in \R^{d \times d}$ of $A$. The real $(d,n)$-Grassmannian can be defined as $\Gr(d,n) := \left\{ \Bigl(\det(A_I)\Bigr)_{I \in \binom{[n]}{d}} \colon A \in \R^{d \times n}\right\} \subseteq \R^{\binom{n}{d}}$.  It is known that $\Gr(d,n)$ is a homogeneous algebraic variety defined by a system of quadratic equations, the so-called \emph{Grassmann-Plücker relations}, see for example \cite[Section 2.4]{OrientMat}. In the context of studying inequalities for mixed volumes of zonotopes, the image of $\Gr(d,n)$ under the 
	component-wise absolute value map $\Abs : \R^{ \binom{n}{d} } \to \R^{\binom{n}{d}}$, which sends $g = \bigl(g_I \bigr)_{I \in \binom{[n]}{d} }$ to $\Abs(g)= \bigl( | g_I| \bigr)_{I \in \binom{[n]}{d}}$,  plays a key role. By considering the $3 \times 3$ minors of the matrix $A = (a_1,\ldots,a_m, e_1, e_2) \in \R^{3 \times (m+2)}$, one can see that the assertion of Lemma~\ref{L:FMMZ} is equivalent to the validity of the quadratic inequality 
	\[
		\biggl( \sum_{ I \in \binom{[m]}{3} } q_I \biggr) \biggl( \sum_{i=1}^m q_{ \{i,m+1,m+2\} } \biggr) \le \biggl( \sum_{ S \in \binom{[m]}{2} } q_{S \cup \{m+1\}} \biggr) \biggl( \sum_{ T \in \binom{[m]}{2} } q_{T \cup \{m+2\}} \biggr) 
	\] 
	for all $q \in \Abs(\Gr(3,m+2))$. As observed in the previous remark, the case $m=4$ implies the case of an arbitrary $m \ge 4$. In this case, we have the inequality 
	\begin{align*} 
		& ( q_{234} + q_{134} + q_{124} + q_{123} ) (q_{156} + q_{256} + q_{356} + q_{456}) 
		\\ \le & (q_{125} + q_{135} + q_{145} + q_{235} + q_{245} + q_{345} )  (q_{126} + q_{136} + q_{146} + q_{236} + q_{246} + q_{346} ) 
	\end{align*} 
	valid for $q \in \Abs(\Gr(3,6))$. 
\end{Rem}



\bibliographystyle{acm} 
\bibliography{lit}

\end{document}